\numberwithin{equation}{section}
\newtheorem{thm}{Theorem}[section]
\newtheorem{lem}[thm]{Lemma}
\theoremstyle{definition}
\newtheorem{defn}{Definition}[section]
\newtheorem{ex}[defn]{Example}
\newtheorem{rmk}[defn]{Remark}
\newcommand{\chr}{\mathrm{char}}
\newcommand{\sign}{\text{sgn}}
\newcommand{\triv}{\text{triv}}
\newcounter{countcases}
\newcommand{\PBS}[1]{\let\temp=\\#1\let\\=\temp}  
\numberwithin{figure}{section}
\begin{document}
\author{Brendon Rhoades}
\email{brhoades@math.mit.edu}
\address{Brendon Rhoades, Department of Mathematics, Massachusetts Institute
of Technology, Cambridge, MA, 02139}
\title[Hall-Littlewood polynomials and fixed point enumeration]
{Hall-Littlewood polynomials and fixed point enumeration}


\bibliographystyle{../dart}

\date{\today}

\begin{abstract}
We resolve affirmatively some conjectures of 
Reiner, Stanton, and White \cite{ReinerComm} regarding 
enumeration of transportation matrices which are 
invariant under certain cyclic row and column rotations.
Our results are phrased in terms of the bicyclic sieving
phenomenon introduced by Barcelo, Reiner, and Stanton \cite{BRSBiD}.
The proofs of our results use various tools from symmetric 
function theory such as the Stanton-White rim hook correspondence
\cite{SW} and results concerning the specialization of 
Hall-Littlewood polynomials due to Lascoux, Leclerc, and 
Thibon \cite{LLTUnity} \cite{LLTRibbon}.
\end{abstract}
\maketitle

\section{Introduction and Main Results}\label{s:intro}

Let $X$ be a finite set and $C \times C'$ be a direct product 
of two finite cyclic groups acting on $X$.  Fix generators $c$ and $c'$ for $C$ and $C'$ and 
let $\zeta, \zeta' \in \mathbb{C}$ be two roots of unity
having the same multiplicative orders as $c, c'$.
Let
$X(q,t) \in \mathbb{C}[q,t]$ be a polynomial in two 
variables.  
Following
Barcelo, Reiner, and Stanton \cite{BRSBiD}, we say that the triple
$(X, C \times C', X(q,t))$ \emph{exhibits the bicyclic
sieving phenomenon} (biCSP) if for any integers 
$d, e \geq 0$ the cardinality of the fixed point set
$X^{(c^d,c'^e)}$ is equal to the polynomial evaluation
$X(\zeta^d,\zeta^e)$.  
The biCSP encapsulates several combinatorial phenomena:
specializing to the case where one of the cyclic groups is
trivial yields the \emph{cyclic sieving phenomenon} of 
Reiner, Stanton, and White \cite{RSWCSP} and specializing further
to the case where the nontrivial cyclic group has order 
two yields the \emph{$q = -1$ phenomenon} of Stembridge 
\cite{StemTab}.  
Moreover, the fact that the identity element in any group
action fixes everything implies that whenever 
$(X, C \times C', X(q,t))$ exhibits the biCSP,
we must have that the $q = t = 1$ specialization
$X(1,1)$ is equal to the cardinality $|X|$ of the set $X$.
In this paper we
prove a pair of biCSPs conjectured by 
Reiner, Stanton, and White where the sets $X$ are certain 
sets of matrices acted on by row and column rotation and the 
polynomials $X(q,t)$ are bivariate deformations of 
identities arising from the RSK insertion algorithm.
Our proof, outlined in Section 2, relies on symmetric function theory 
and plethystic substitution.
In Section 3 we outline an alternative 
argument due to Victor Reiner which proves 
these biCSPs `up to modulus' using DeConcini-Procesi modules.

Given a partition $\lambda \vdash n$, recall that a
\emph{semistandard Young tableau (SSYT) of shape $\lambda$} is
a filling of the Ferrers diagram of $\lambda$ with 
positive numbers which increase strictly down columns and 
weakly across rows.  For a SSYT $T$ of shape $\lambda$, the 
\emph{content} of $T$ is the (weak) composition
$\mu \models n$ given by letting $\mu_i$ equal the number
of $i's$ in $T$.  
A SSYT $T$ is called \emph{standard} (SYT) if it has content 
$1^n$.
For a partition 
$\lambda$ and a composition $\mu$ of $n$, 
the \emph{Kostka number} 
$K_{\lambda,\mu}$ is equal to the number of SSYT of shape
$\lambda$ and content $\mu$.  

The \emph{Kostka-Foulkes polynomials} $K_{\lambda,\mu}(q)$, indexed by
a partition $\lambda \vdash n$ and a composition 
$\mu \models n$, arose originally as the entries of the transition
matrix between the Schur function and Hall-Littlewood symmetric
function bases of the ring of symmetric functions (with coefficients
in $\mathbb{C}(q)$ where $q$ is an indeterminate).  A combinatorial
proof of the positivity of their coefficients was given by Lascoux
and Sch\"utzenberger \cite{LSFoulkes} by identifying $K_{\lambda,\mu}(q)$ as 
the generating function for the statistic of \emph{charge} on the set
of semistandard tableaux of shape $\lambda$ and content $\mu$. 
We outline the definition of charge as the rank function of a
cyclage poset.
 
Let $\mathcal{A}^{*}$ denote the free monoid of words $w_1 \dots w_k$ of any
length with letters drawn from $[n]$.  Let $\equiv$ be the
equivalence relation on $\mathcal{A}^{*}$ induced by 
$
\begin{array}{cccc}
RkijR' \equiv RikjR', & RjikR \equiv RjkiR', & RjiiR' \equiv RijiR', &  RjijR' \equiv RjjiR',
\end{array}
$
\noindent
where $1 \leq i < j < k \leq n$ and $R$ and $R'$ are any words in the
monoid $\mathcal{A}^{*}$.  
The \emph{Robinson-Schensted-Knuth correspondence} yields
an algorithmic bijection between words $w$ in $\mathcal{A}^{*}$ and pairs
$(P(w),Q(w))$ of tableaux, where $P$ is a SSYT with entries
$\leq n$ and $Q$ is a SYT with
the shape of $P(w)$ equal to the shape of $Q(w)$.  For details on the RSK
correspondence, see for example \cite{Sag} or \cite{StanEC2}.  
The RSK correspondence sets up an equivalence relation $\equiv'$ on words in 
$\mathcal{A}^{*}$ by setting 
$w \equiv' w'$ if and only if $Q(w) = Q(w')$.  
It is a result of Knuth \cite{KnuthPerm} that the equivalence relations $\equiv$ and
$\equiv'$ on $\mathcal{A}^{*}$ agree.  
That is, for any $w, w' \in \mathcal{A}^{*}$ we have
$w \equiv w'$ if and only if $Q(w) = Q(w')$.  Therefore, the quotient monoid
$\mathcal{A}^{*}/\equiv$ is in a natural bijective correspondence 
with the set of
SSYT with entries $\leq n$.  This quotient is called the
\emph{plactic monoid}.  

\emph{Cyclage} is a monoid analogue of the
group operation of conjugation introduced by
Lascoux and Sch\"utzenberger \cite{LS7}.  
Given $w, w' \in \mathcal{A}^{*}/\equiv$, say that $w \prec w'$ if there exists $i \geq 2$ and $u \in \mathcal{A}^{*}/\equiv$ 
so that $w = iu$ and $w' = ui$.  
For a fixed composition $\mu \models n$, the transitive closure of the relation
$\prec$ induces a partial order on the subset 
of $\mathcal{A}^{*}/\equiv$ consisting
of words of content $\mu$, and therefore also on the set of SSYT of
content $\mu$.  For fixed $\mu$, the rank generating function for this
poset is called \emph{cocharge} and is therefore a statistic on 
SSYT of content $\mu$.  The rank function of the order theoretic dual of this poset is called \emph{charge}.  Lascoux and Sc\"utzenberger
\cite{LSFoulkes} proved that for any partition $\lambda \vdash n$ and any composition
$\mu \models n$, we have that            
\begin{equation*}
K_{\lambda,\mu}(q) = \sum_{T} q^{charge(T)},
\end{equation*} 
where the sum ranges over all SSYT $T$ of shape $\lambda$
and content $\mu$.

For $n \geq 0$, define $\epsilon_n(q,t) \in \mathbb{N}[q,t]$ to be 
$(qt)^{n/2}$ if $n$ is even and 1 if $n$ is odd.
The type $A$ specialization of Theorem 1.4 of Barcelo,
Reiner, and Stanton \cite{BRSBiD} yields the following:

\begin{thm} (\cite{BRSBiD})
Let $X$ be the set of $n \times n$ permutation matrices and
$\mathbb{Z}_n \times \mathbb{Z}_n$ act on $X$ by row and 
column rotation.  The triple 
$(X, \mathbb{Z}_n \times \mathbb{Z}_n, X(q,t))$ exhibits 
the biCSP, where
\begin{equation*}
X(q,t) = \epsilon_n(q,t)
\sum_{\lambda \vdash n} K_{\lambda,1^n}(q) K_{\lambda,1^n}(t).
\end{equation*}
\end{thm}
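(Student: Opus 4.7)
The plan is to reduce the biCSP to character theory of the symmetric group. Parametrize $X$ by $S_n$ via $\sigma \leftrightarrow M_\sigma$, and let $\tau = (1,2,\ldots,n) \in S_n$ denote the long cycle. Row and column rotations translate to left and right multiplication by powers of $\tau$, so up to convention the fixed points of $(c^d, c'^e)$ correspond to permutations $\sigma$ satisfying $\tau^d \sigma = \sigma \tau^e$, equivalently $\sigma^{-1} \tau^d \sigma = \tau^e$. This set is nonempty exactly when $\tau^d$ and $\tau^e$ share a cycle type, that is, when $\gcd(d,n) = \gcd(e,n) =: g$; in that case it forms a coset of $C_{S_n}(\tau^e)$ and has cardinality $(n/g)^g \cdot g!$.

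Next one evaluates $X(\zeta^d, \zeta^e)$ with $\zeta = e^{2\pi i/n}$. The key identification is the classical equality $K_{\lambda,1^n}(q) = f^\lambda(q) := \sum_{T \in SYT(\lambda)} q^{\maj(T)}$, which expresses $K_{\lambda,1^n}$ as the fake-degree polynomial of the irreducible $S_n$-module $S^\lambda$ (both sides compute the graded multiplicity of $S^\lambda$ in the coinvariant algebra). The Springer/Kraskiewicz--Weyman theorem then identifies the root-of-unity specialization with a character value, $f^\lambda(\zeta^k) = \chi^\lambda(\tau^k)$, so
\[
\sum_{\lambda \vdash n} K_{\lambda,1^n}(\zeta^d)\, K_{\lambda,1^n}(\zeta^e) \;=\; \sum_{\lambda \vdash n} \chi^\lambda(\tau^d)\, \chi^\lambda(\tau^e).
\]
Column orthogonality of the $S_n$ character table (whose entries are real) evaluates the right-hand sum as $|C_{S_n}(\tau^d)|$ when $\tau^d \sim \tau^e$ and as $0$ otherwise, matching the fixed-point enumeration on the nose.

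It remains to verify that the prefactor $\epsilon_n(\zeta^d, \zeta^e)$ contributes $+1$ whenever the fixed-point set is nonempty. For $n$ odd this is immediate. For $n$ even one computes $\epsilon_n(\zeta^d, \zeta^e) = \zeta^{(d+e)n/2} = (-1)^{d+e}$, and a short parity check shows that the condition $\gcd(d,n) = \gcd(e,n)$ forces $d$ and $e$ to have the same parity (dictated by the $2$-adic part of the common gcd against $n$), so $d+e$ is even. When the fixed-point set is empty the orthogonality sum already vanishes and $\epsilon_n$ is irrelevant.

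The main obstacle is marshalling the two classical inputs: the identity $K_{\lambda,1^n}(q) = f^\lambda(q)$, via Lusztig's interpretation of Kostka--Foulkes polynomials as graded multiplicities in the coinvariant algebra, and the Kraskiewicz--Weyman evaluation of the fake degree at roots of unity. Once these are in hand, the remainder is bookkeeping. Careful attention to conventions (reading-word direction on SYT, sign of rotation, left- versus right-action) is required so that the identifications hold on the nose rather than up to an unwanted twist.
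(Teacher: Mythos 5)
Your argument is correct. You parametrize $X$ by $S_n$, observe that the fixed-point set of $(c^d,c'^e)$ is $\{\sigma : \sigma^{-1}\tau^d\sigma = \tau^e\}$, which is empty unless $\gcd(d,n)=\gcd(e,n)$ and otherwise a coset of $C_{S_n}(\tau^d)$ of size $(n/g)^g\,g!$ with $g=\gcd(d,n)$; you then identify $K_{\lambda,1^n}(q)$ with the fake-degree polynomial $f^\lambda(q)$, invoke the Kraskiewicz--Weyman decomposition of $S^\lambda$ restricted to the cyclic group $\langle\tau\rangle$ to get $f^\lambda(\zeta^d)=\chi^\lambda(\tau^d)$, and close with column orthogonality. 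The sign analysis is also right: since $g$ is even iff $d$ is even (for $n$ even), the hypothesis $\gcd(d,n)=\gcd(e,n)$ forces $d\equiv e\pmod 2$, so $\epsilon_n(\zeta^d,\zeta^e)=(-1)^{d+e}=1$ whenever the count is nonzero. (In fact this shows the factor $\epsilon_n$ is redundant in the permutation case, though it is genuinely needed in the generalizations.)

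The route is genuinely different from the paper's. The paper does not prove Theorem~1.1 directly---it cites \cite{BRSBiD} and instead proves the more general Theorem~1.3, by rewriting $\sum_\lambda K_{\lambda,\mu}(\zeta)K_{\lambda,\nu}(\zeta')$ as a Hall inner product of specialized Hall--Littlewood functions $Q_\mu(x;\zeta)$, applying the Lascoux--Leclerc--Thibon evaluation of $Q_\mu$ at roots of unity in terms of the plethysm $\psi^k$, using the $\phi_k/\psi^k$ operator calculus and Littlewood's formula to reduce to a single $\psi^m(h_\alpha)$, expanding that as a signed sum over $m$-ribbon tableaux, and finally invoking the Stanton--White rim-hook correspondence and RSK to match the count of fixed $\mathbb N$-matrices. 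Your character-theoretic argument sidesteps all of this machinery by exploiting that $K_{\lambda,1^n}$ is a fake degree---but precisely because of that, it is confined to the case $\mu=\nu=1^n$. For general $\mu$ there is no such elementary identification of $K_{\lambda,\mu}(\zeta)$ with a character value; the analogous statement is the Morita--Nakajima isomorphism $R_\mu\cong\mathbb C[Y_\mu]$ for the DeConcini--Procesi/Springer modules, which is exactly what Section~3 of the paper uses for its alternative ``up to modulus'' proof. So your argument is essentially the $\mu=\nu=1^n$ case of that Section~3 argument (with Kraskiewicz--Weyman playing the role of Morita--Nakajima), whereas the paper's main proof in Section~2 goes through symmetric function theory in order to handle the general $\mathbb N$-matrix and $0,1$-matrix cases uniformly.
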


\begin{ex}
Let $n = 4$.  We have that 
\begin{align*}
X(q,t)  =  (qt)^2 \Big[&(qt)^6 + (qt)^3(1+q+q^2)(1+t+t^2) + (qt)^2(1+q^2)(1+t^2)  \\
                    &+ (qt)(1+q+q^2)(1+t+t^2) + 1 \Big] .
\end{align*}
Consider the action of the diagonal subgroup of $\mathbb{Z}_4 \times \mathbb{Z}_4$ on $X = S_4$.  Let $r$ be the generator of this subgroup, so that $r$ acts on $X$ by a simultaneous single row and column shift.  We have that $X(i,i) = 4$, reflecting the fact that the fixed point set 
$X^r = \{ 1234, 2341, 3412, 4123 \}$ has four elements.  Also, $X(-1,-1) = 8$, whereas the fixed point set $X^{r^2} = \{ 1234, 2341, 3412, 4123, 1432, 2143, 3214, 4321 \}$.  Finally, we have that 
$X(i,-1) = 0$, reflecting the fact that no $4 \times 4$ permutation matrix is fixed by a simultaneous $1$-fold row shift and $2$-fold column shift.
\end{ex}

The $q = t = 1$ specialization of $X(q,t)$ in the above
result is implied by the RSK insertion algorithm on
permutations.  The following generalization of Theorem 1.1 to the case of words was known to
Reiner and White but is unpublished.  
For any composition $\mu \models n$, let 
$\ell(\mu)$ denote the number of parts of $\mu$ and
$|\mu| = n$ denote the sum of the parts of $\mu$.
A composition $\mu \models n$ is
said to have cyclic symmetry of order $a$ if one has
$\mu_i = \mu_{i+a}$ always, where subscripts are 
interpreted modulo $\ell(\mu)$.

\begin{thm} (\cite{ReinerComm}, \cite{WComm})
Let $\mu \models n$ be a composition with cyclic
symmetry of order $a | \ell(\mu)$.  Let $X$ be the set of
length $n$ words of content $\mu$, thought of as
0,1-matrices in the standard way.  The product of 
cyclic groups $\mathbb{Z}_{\ell(\mu)/a} \times \mathbb{Z}_n$
acts on $X$ by $a$-fold row rotation and 1-fold column
rotation.  

The triple 
$(X, \mathbb{Z}_{\ell(\mu)/a} \times \mathbb{Z}_n,
X(q,t))$ exhibits the biCSP, where
\begin{equation*}
X(q,t) = \epsilon_n(q,t)
\sum_{\lambda \vdash n} K_{\lambda,\mu}(q) K_{\lambda,1^n}(t).
\end{equation*} 
\end{thm}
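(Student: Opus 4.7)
The plan is to verify the biCSP by evaluating $X(q,t)$ at each pair $(\zeta^d,\xi^e)$ of roots of unity, where $\zeta = e^{2\pi i a/\ell(\mu)}$ and $\xi = e^{2\pi i/n}$, and matching the result against a direct combinatorial count of the simultaneous fixed points of $c^d$ and $c'^e$ on $X$.

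First I would use the Robinson-Schensted-Knuth correspondence to partition $X$ into fibers indexed by partitions $\lambda \vdash n$: each word $w$ of content $\mu$ bijects with a pair $(P(w),Q(w))$ in which $P$ is a SSYT of content $\mu$ and $Q$ is an SYT, both of shape $\lambda$. Setting $q=t=1$ recovers the RSK identity $|X| = \sum_\lambda K_{\lambda,\mu} f^\lambda$, matching $X(1,1)$. The $1$-fold column rotation on words descends through RSK to Sch\"utzenberger promotion on the recording tableau $Q$ (a classical fact, leaving $P$ and the shape fixed); the $a$-fold row rotation on the underlying $0,1$-matrix corresponds on the word side to the cyclic relabeling of letters $i \mapsto i-a \pmod{\ell(\mu)}$, an action which preserves $X$ precisely by the $a$-fold cyclic symmetry of $\mu$.

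The heart of the argument is then an evaluation of
\[
\epsilon_n(\zeta^d,\xi^e) \sum_{\lambda \vdash n} K_{\lambda,\mu}(\zeta^d)\, K_{\lambda,1^n}(\xi^e)
\]
using the Lascoux-Leclerc-Thibon specialization theorems. For $\zeta$ a primitive $k$-th root of unity, $K_{\lambda,\mu}(\zeta)$ vanishes unless $\lambda$ has empty $k$-core and $\mu$ is compatible with a $k$-fold grouping, and is otherwise (up to an explicit sign) a product of classical Kostka numbers indexed by the $k$-quotient of $\lambda$ and the reduced content of $\mu$. The same principle applied to $K_{\lambda,1^n}(\xi^e)$, combined with Rhoades' CSP theorem for promotion on SYT, gives the analogous promotion-fixed count on $Q$; together these factorizations convert the sum over $\lambda$ into a sum over pairs of tableau-tuples.

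To close the argument I would biject these tuples with the fixed points of $(c^d,c'^e)$ on $X$ via the Stanton-White rim-hook correspondence, which turns SYT of $\lambda$ with empty $k$-core into $k$-tuples of SYT on the $k$-quotient. The main obstacle is reconciling the several sign conventions -- from the prefactor $\epsilon_n(\zeta^d,\xi^e)$, from the rim-hook bijections on each side, and from the LLT evaluation -- so that both sides agree as nonnegative integers rather than merely in absolute value. It is precisely this sign bookkeeping that is sidestepped by the DeConcini-Procesi argument of Section~3, and that accounts for the latter's `up to modulus' weakness.
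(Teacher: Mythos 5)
Your sketch shares the paper's two main tools -- the Lascoux--Leclerc--Thibon evaluation of Hall--Littlewood polynomials at roots of unity, and the Stanton--White rim hook correspondence -- but it routes through them in a way the paper deliberately avoids, and that route has a gap at its center. You propose to track the $\mathbb{Z}_n$ column-rotation action through RSK, asserting that cyclic shift of the word corresponds to Sch\"utzenberger promotion on the recording tableau $Q$, and then to invoke a promotion-CSP on standard Young tableaux. That CSP is not available here: promotion on SYT of an arbitrary shape $\lambda \vdash n$ does not have order $n$ (Haiman's periodicity result is special to rectangles), and the sum defining $X(q,t)$ ranges over all $\lambda \vdash n$. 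The paper never tracks the cyclic actions through RSK at all; it proves Theorem 1.2 as the $\nu = 1^n$, $b = 1$ special case of Theorem 1.3 by a direct symmetric-function computation that equates a polynomial evaluation with a fixed-point count without ever asking how RSK intertwines the group actions.

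The second gap is in how you combine the two LLT evaluations. You treat $K_{\lambda,\mu}(\zeta)$ and $K_{\lambda,1^n}(\xi^e)$ as if each could be independently rewritten as a (signed) count of ribbon tableaux or quotient tuples and then paired off. But when $\zeta$ has order $k$ and $\xi^e$ has order $\ell$ with $k \neq \ell$, the two expansions are built from ribbons of different sizes and cannot be matched directly. The paper's proof passes the sum through the Hall inner product $\langle Q_\mu(x;\zeta), Q_\nu(x;\zeta')\rangle = \pm\langle \psi^k(h_{\mu^{1/k}}), \psi^{\ell}(h_{\nu^{1/\ell}})\rangle$, and then uses the adjoint operators $\phi_a$ together with the operator calculus of Lemma 2.1 and the evaluation $\phi_a(h_N) = h_{N/a}$ to rewrite both arguments at the common ribbon size $m = \gcd(k,\ell)$, as $\langle \psi^m(h_{\frac{m}{\ell}\mu^{1/k}}), \psi^m(h_{\frac{m}{k}\nu^{1/\ell}})\rangle$. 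Only after this reconciliation do the ribbon-tableau expansion, orthonormality of Schur functions, Stanton--White, RSK on the resulting $m$-tuples, and a fundamental-domain argument produce the fixed-point count. Without that reduction step your ``sum over pairs of tableau-tuples'' is not well defined; and without replacing the promotion claim by the paper's enumerative argument the fixed points of the $\mathbb{Z}_n$ factor are never actually reached.
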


\begin{ex}
Let us give an example to show why the factor $\epsilon_n(q,t)$ is necessary
in the definition of $X(q,t)$.  Take $n = 2, \mu = (2),$ and $a = 1$.  The 
set $X$ is the singleton 
$\{11\}$
consisting of the word $11$.  One verifies that
$
\begin{array}{cccc}
K_{(1,1),(2)}(q) = 0, &K_{(2),(2)}(q) = 1, &K_{(1,1),(2)}(t) = 1, 
&K_{(2),(2)}(t) = t,
\end{array}
$
\noindent
so that
\begin{equation*}
X(q,t) = (qt) [ 0(1) + 1(t) ] = qt^2.
\end{equation*}
We have the evaluation $X(1,-1) = 1$, which would have been negative if
$X(q,t)$ did not contain the factor of $\epsilon_2(q,t) = qt$.  
\end{ex}

The $q = t = 1$ specialization of the identity in the 
above theorem arises from the application of RSK to
the set of words with content $\mu$.  
The following
$\mathbb{N}$-matrix generalization 
of Theorem 1.2 was conjectured (unpublished)
by Reiner and White in 2006.

\begin{thm}
Let $\mu, \nu \models n$ be two compositions having 
cyclic symmetries of orders $a | \ell(\mu)$ and 
$b | \ell(\nu)$, respectively.  Let $X$ be the set of 
$\ell(\mu) \times \ell(\nu)$ $\mathbb{N}$-matrices with
row content $\mu$ and column content $\nu$.
The product of cyclic groups 
$\mathbb{Z}_{\ell(\mu)/a} \times \mathbb{Z}_{\ell(\nu)/b}$
acts on $X$ by $a$-fold row rotation and $b$-fold
column rotation.  

The triple $(X,\mathbb{Z}_{\ell(\mu)/a} \times \mathbb{Z}_{\ell(\nu)/b},
X(q,t))$ exhibits the biCSP, where
\begin{equation*}
X(q,t) = \epsilon_n(q,t) 
\sum_{\lambda \vdash n} K_{\lambda,\mu}(q) K_{\lambda,\nu}(t).
\end{equation*} 
\end{thm}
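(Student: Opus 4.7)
The plan is to establish, for every pair of nonnegative integers $d, e$, the identity $|X^{(c^d, c'^e)}| = X(\zeta^d, \xi^e)$, where $\zeta$ and $\xi$ are primitive roots of unity of orders $k := \ell(\mu)/a$ and $m := \ell(\nu)/b$ respectively. The first reduction is via the RSK correspondence, which bijects $X$ with pairs $(P, Q)$ of semistandard Young tableaux of a common shape $\lambda \vdash n$, with $P$ of content $\nu$ and $Q$ of content $\mu$. This immediately gives $|X| = \sum_\lambda K_{\lambda, \mu} K_{\lambda, \nu} = X(1,1)$, verifying the identity at $d = e = 0$.

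For the general fixed-point count, set $r := k/\gcd(d,k)$ and $s := m/\gcd(e,m)$. Invariance under $c^d$ forces the rows of a matrix $M \in X^{(c^d, c'^e)}$ to be periodic with period $a\gcd(d,k)$, and invariance under $c'^e$ imposes a column period $b\gcd(e,m)$. Hence $M$ is the $r$-fold vertical and $s$-fold horizontal repetition of a fundamental block $M'$ of dimensions $a\gcd(d,k) \times b\gcd(e,m)$; compatibility of marginals forces the divisibilities $s \mid \mu_i$ for $1 \le i \le a$ and $r \mid \nu_j$ for $1 \le j \le b$ (otherwise $X^{(c^d, c'^e)} = \emptyset$). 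When these hold, $M'$ is an arbitrary $\mathbb{N}$-matrix with reduced row and column contents $\widetilde{\mu}$ and $\widetilde{\nu}$, both of total size $n/(rs)$, so a second application of RSK yields
\[
|X^{(c^d, c'^e)}| = \sum_{\widetilde\lambda \vdash n/(rs)} K_{\widetilde\lambda, \widetilde\mu} \, K_{\widetilde\lambda, \widetilde\nu}.
\]

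To compute $X(\zeta^d, \xi^e)$, I would invoke the Lascoux--Leclerc--Thibon formulas for Kostka--Foulkes polynomials at roots of unity: $K_{\lambda, \mu}(\zeta^d)$ vanishes unless $\lambda$ has empty $r$-core, in which case it equals a signed product of ordinary Kostka numbers indexed by the $r$-quotient $(\lambda^{(0)}, \ldots, \lambda^{(r-1)})$ of $\lambda$, with the sign provided by the Stanton--White rim-hook correspondence. An analogous identity for $K_{\lambda, \nu}(\xi^e)$ uses the $s$-core and $s$-quotient. Combining these, the double sum $\sum_\lambda K_{\lambda, \mu}(\zeta^d) K_{\lambda, \nu}(\xi^e)$ collapses to a signed sum over partitions $\lambda \vdash n$ simultaneously empty in $r$-core and $s$-core, weighted by products of ordinary Kostka numbers attached to the joint quotient data.

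The primary obstacle will be sign and compatibility management: the rim-hook signs produced by the two LLT evaluations must cancel exactly against the prefactor $\epsilon_n(q, t)$, and the two applications of the Stanton--White correspondence (one tied to $r$, the other to $s$) must intertwine to produce a single combinatorial object of size $n/(rs)$ that matches the $\widetilde\lambda$-indexed expression from the fixed-point count. A plethystic-substitution argument, carried out on the Cauchy kernel $\prod_{i,j}(1 - x_i y_j)^{-1} = \sum_\lambda s_\lambda(x) s_\lambda(y)$ using Macdonald's identities for Hall--Littlewood polynomials under the standard root-of-unity substitutions, is the natural framework for implementing both bijections at once, and should simultaneously force all signs to align and yield the claimed identity.
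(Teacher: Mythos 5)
Your fixed-point analysis contains a fundamental error that invalidates the combinatorial side of the argument. In the biCSP, $X^{(c^d,c'^e)}$ denotes the set of points fixed by the \emph{single} group element $(c^d,c'^e) \in C \times C'$, not the joint fixed-point set $X^{c^d} \cap X^{c'^e}$. A matrix $M$ fixed by $(c^d,c'^e)$ satisfies the single diagonal constraint $M_{i,j} = M_{i+da,\,j+eb}$ (indices read cyclically), and this is strictly weaker than the pair of constraints $M_{i,j} = M_{i+da,\,j}$ and $M_{i,j} = M_{i,\,j+eb}$ that you impose. Consequently your claims that the rows are periodic of period $a\gcd(d,k)$, that the columns are periodic of period $b\gcd(e,m)$, and that $M$ is a tiling of a small rectangular block $M'$, are all false in general. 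For a concrete counterexample, take $n=2$, $\mu = \nu = (1,1)$, $a=b=1$, $d=e=1$: then $X$ consists of the two $2\times 2$ permutation matrices, both of which satisfy $M_{11}=M_{22}$, $M_{12}=M_{21}$ and hence are fixed by $(c,c')$, so $|X^{(c,c')}| = 2$; but your divisibility condition $s \mid \mu_i$ reads $2 \mid 1$ and fails, so your formula predicts $|X^{(c,c')}| = 0$. (The theorem's polynomial gives $X(-1,-1) = (qt)(qt+1)\big|_{q=t=-1} = 2$, as it should.) The correct fundamental-domain picture is governed by orbits of the map $(i,j)\mapsto (i+da,j+eb)$, which have size $\mathrm{lcm}(r,s)$ rather than $rs$, and a single matrix $M'$ of total weight $n/(rs)$ is not the right data; in the paper the fixed points end up parametrized by $m$-tuples of matrices where $m = \gcd(r,s)$, precisely matching the tuples produced by the Stanton--White correspondence with ribbon parameter $m$.

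There is also a second, structural divergence from the paper worth flagging even though it is moot given the error above. You propose to apply the LLT root-of-unity evaluation separately to $K_{\lambda,\mu}(\zeta^d)$ with the $r$-quotient and to $K_{\lambda,\nu}(\xi^e)$ with the $s$-quotient, and then to ``intertwine'' the two Stanton--White bijections. When $\gcd(r,s) > 1$ this intertwining is genuinely problematic: there is no simple way to merge the $r$-quotient data with the $s$-quotient data of a partition. The paper sidesteps this entirely by writing $\langle Q_\mu(x;\zeta), Q_\nu(x;\zeta')\rangle$ as $\pm\langle \psi^r(h_{\mu^{1/r}}), \psi^s(h_{\nu^{1/s}})\rangle$ and then using the operator calculus of $\psi^k$ and its adjoint $\phi_k$ (Lemma 2.1) together with Littlewood's evaluation of $\phi_a(h_N)$ to push everything down to a single application of $\psi^m$ with $m = \gcd(r,s)$, so that Stanton--White is invoked only once, at the common parameter $m$. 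That reduction is the key mechanism missing from your proposal.
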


As before, the $q = t = 1$ specialization of the above identity
follows from applying RSK to the set $X$.  The `dual Cauchy'
version of the previous result which follows was suggested
by Dennis Stanton after the author's thesis defense.

For any $n > 0$, let $\delta_n(q,t) \in \mathbb{C}[q,t]$ be a polynomial
whose evaluations $\delta_n(\zeta,\zeta')$ at $n^{th}$ roots of unity
$\zeta, \zeta'$ with multiplicative orders $|\zeta| = k$ and
$|\zeta'| = \ell$ satisfy
\begin{equation*}
\delta_n(\zeta,\zeta') = \begin{cases}
1 & \text{if $\frac{n}{k}$ and $\frac{n}{\ell}$ are even,} \\
1 & \text{if $k$ and $\ell$ are odd,} \\
-1 & \text{if $k, \ell$ are even and $\frac{n}{k}, \frac{n}{\ell}$ are odd,} \\
-1 & \text{if exactly one of $\frac{n}{k},\frac{n}{\ell}$ is even and
both $k, \ell$ are even,}\\
1 & \text{if exactly one of $\frac{n}{k}, \frac{n}{\ell}$ is even and
exactly one of $k, \ell$ is even.}
\end{cases}
\end{equation*} 
An explicit formula for a choice of $\delta_n(q,t)$ can be found using 
Fourier analysis on the direct product $\mathbb{Z}_n \times \mathbb{Z}_n$ of
cyclic groups, but the formula so obtained is somewhat messy.  
It should be noted that if $n$ is odd, one can take $\delta_n(q,t) \equiv 1$.  

\begin{thm}
Let $\mu, \nu \models n$ be two compositions having 
cyclic symmetries of orders $a | \ell(\mu)$ and 
$b | \ell(\nu)$, respectively.  Let $X$ be the set of 
$\ell(\mu) \times \ell(\nu)$ $0,1$-matrices with
row content $\mu$ and column content $\nu$.
The product of cyclic groups $\mathbb{Z}_{\ell(\mu)/a} \times \mathbb{Z}_{\ell(\nu)/b}$
acts on $X$ by $a$-fold row rotation and $b$-fold column rotation.

The triple $(X, \mathbb{Z}_{\ell(\mu)/a} \times \mathbb{Z}_{\ell(\nu)/b}, X(q,t))$ exhibits the biCSP, where
$X(q,t) \in \mathbb{C}[q,t]$ is
\begin{equation*}
X(q,t) = \delta_n(q,t)
\sum_{\lambda \vdash n} K_{\lambda',\mu}(q) K_{\lambda,\nu}(t).
\end{equation*}
\end{thm}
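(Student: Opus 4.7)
The plan is to parallel the proof of Theorem 1.3, replacing ordinary Robinson-Schensted-Knuth with the \emph{dual} RSK correspondence. Dual RSK gives a bijection between $0,1$-matrices of row content $\mu$ and column content $\nu$ and pairs $(P,Q)$ of semistandard tableaux with $P$ of shape $\lambda'$ and content $\mu$ and $Q$ of shape $\lambda$ and content $\nu$. This immediately recovers the $q=t=1$ specialization $|X|=\sum_{\lambda\vdash n}K_{\lambda',\mu}K_{\lambda,\nu}$; the passage from $\lambda$ to $\lambda'$ on the $\mu$-side is the essential new feature relative to Theorem 1.3.

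The main work is to show $X(\zeta^d,\zeta^e)=|X^{(c^d,c'^e)}|$ for every pair of roots of unity. On the polynomial side, I would invoke the Lascoux-Leclerc-Thibon specialization: when $\zeta$ is a primitive $r$th root of unity and $\mu$ has the appropriate cyclic symmetry so that it descends to a composition $\bar\mu$, the Kostka-Foulkes polynomial $K_{\lambda,\mu}(\zeta)$ vanishes unless the $r$-core of $\lambda$ is empty, and otherwise equals $\pm\prod_i K_{\lambda^{(i)},\bar\mu^{(i)}}$ with a sign coming from the Stanton-White rim-hook correspondence. Applying this to $K_{\lambda',\mu}(\zeta^d)$ and $K_{\lambda,\nu}(\zeta^e)$, and using the standard behavior of conjugation on $r$-cores and $r$-quotients, expresses the polynomial side as a signed sum of products of Kostka numbers on quotient shapes. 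On the combinatorial side, a fixed matrix is determined by a fundamental block which is itself a $0,1$-matrix whose row and column contents are precisely the quotient compositions $\bar\mu$ and $\bar\nu$; applying dual RSK to this block produces the same products of Kostka numbers, summed over pairs of shapes.

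The main obstacle will be pinning down the global sign, which is where $\delta_n(q,t)$ enters. In the $\mathbb{N}$-matrix setting of Theorem 1.3 the Stanton-White sign collapses cleanly into $\epsilon_n(q,t)$, but here the conjugation $\lambda\mapsto\lambda'$ on the $\mu$-side systematically trades horizontal rim hooks for vertical ones, producing an extra parity that depends nontrivially on whether $r=\ell(\mu)/a$, $s=\ell(\nu)/b$, $n/r$, and $n/s$ are even or odd. Reconciling the resulting case analysis with the piecewise definition of $\delta_n(q,t)$, and verifying that the cross terms vanish whenever one of the relevant cores is nonempty, is where the technical heart of the argument lies; the five-case format of $\delta_n(q,t)$ is a strong hint that precisely these parity combinations will survive.
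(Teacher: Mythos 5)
Your high-level strategy matches the paper's: express the Kostka-Foulkes sum as a Hall inner product of (conjugated) Hall-Littlewood functions, specialize via Lascoux-Leclerc-Thibon, convert ribbon tableaux to tuples of ordinary tableaux via Stanton-White, apply dual RSK, and chase signs into $\delta_n(q,t)$. But the proposal elides the step that is actually the technical crux, and contains one claim that is not correct as stated.

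The real difficulty is that the two roots of unity $\zeta^d$ and $\zeta'^e$ generally have \emph{different} orders $k$ and $\ell$. Your plan is to apply the LLT specialization to $K_{\lambda',\mu}(\zeta^d)$ and to $K_{\lambda,\nu}(\zeta'^e)$ independently, obtaining a $k$-quotient expansion on one side and an $\ell$-quotient expansion on the other, then sum over $\lambda$. This does not close up: one is left summing over partitions $\lambda$ with both the $k$-core and $\ell$-core empty, with the two quotient maps giving incompatible indexings, and there is no straightforward way to factor the sum into products of Kostka numbers. (Note in particular that ``empty $k$-core and empty $\ell$-core'' is not the same as ``empty $\mathrm{lcm}(k,\ell)$-core''.) The paper sidesteps this entirely by working at the level of symmetric functions \emph{before} expanding: it writes the inner product as $\langle\omega\psi^k(h_{\mu^{1/k}}),\psi^\ell(h_{\nu^{1/\ell}})\rangle$, sets $m=\gcd(k,\ell)$, and uses the adjoint pair $\psi^k,\phi_k$ together with Littlewood's evaluation $\phi_a(h_N)=h_{N/a}$ to push both sides down to $\psi^m$ applied to $h$'s, so that Stanton-White is used only at a single common modulus $m$. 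For Theorem 1.4 specifically, a further ingredient is needed that you do not mention: one must commute $\omega$ past the raising and lowering operators, and these commutation rules (the paper's Lemma 2.3) are not sign-free when powers of $2$ are involved -- this is exactly what feeds into the case analysis for $\delta_n$. Without this operator calculus, the argument does not go through.

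Separately, the statement that ``a fixed matrix is determined by a fundamental block which is itself a $0,1$-matrix whose row and column contents are precisely the quotient compositions $\bar\mu$ and $\bar\nu$'' is not accurate when $k\neq\ell$. A matrix fixed by the simultaneous shift $(c^d,c'^e)$ is determined by a fundamental domain for the cyclic subgroup of $\mathbb{Z}_{\ell(\mu)}\times\mathbb{Z}_{\ell(\nu)}$ generated by the shift vector, which in the paper's bookkeeping is encoded not as a single block but as an $m$-tuple of smaller $0,1$-matrices whose row and column contents sum to $\tfrac{m}{\ell}\mu^{1/k}$ and $\tfrac{m}{k}\nu^{1/\ell}$. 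This matches the $m$-tuples of tableaux coming out of Stanton-White. The ``single block'' picture only becomes literally correct in degenerate cases, and using it as a general principle would misalign the combinatorial and algebraic sides of the identity.
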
    

\begin{ex}
Let us give an example to show why the factor of $\delta_n(q,t)$ is necessary in the statement of
Theorem 1.4.  Take $n = 2$, $\mu = \nu = (1,1)$, and $a = b = 1$.  The set $X$ can be identified
with the two permutation matrices in $S_2$.  The polynomial $X(q,t)$ is given by
$X(q,t) = \delta_2(q,t) (q+t)$ and the evaluation
$X(-1,-1) = \delta_2(-1,-1) (-1-1) = (-1)(-2) = 2$ would have been negative without
the factor $\delta_2(-1,-1)$.
\end{ex}

The $q = t = 1$ specialization of Theorem 1.4 
follows from applying the \emph{dual} RSK algorithm 
to the set $X$ (see \cite{StanEC2}).  By the definition of 
$\delta_n(q,t)$, we have that
$\delta_n(q,t) \in \{1, -1 \}$ whenever $q$ and $t$ are specialized to 
$n^{th}$ roots of unity.  Therefore, omitting the factor
$\delta_n(q,t)$ in Theorem 1.4 gives a biCSP `up to sign'.

\begin{rmk}
Given a finite set $X$ acted on by a finite product $C \times C'$ of cyclic
groups, it is always possible to find some polynomial $X(q,t)$ such that
the triple $(X, C \times C', X(q,t))$ exhibits the biCSP.  The interest 
in a biCSP lies in giving a polynomial $X(q,t)$ with a particularly nice form,
either as an explicit product/sum formula or as a generating function for
some pair of natural combinatorial statistics on the set $X$.  
We observe that, apart from the factors of $\epsilon_n(q,t)$ and
$\delta_n(q,t)$, our polynomials $X(q,t)$ are nice in this latter sense.

Indeed, one can represent any $\mathbb{N}$-matrix $A$ whose entries sum to $n$
as a $2 \times n$ matrix 
$\begin{pmatrix}
a_{11} & a_{12} & \dots & a_{1n} \\
a_{21} & a_{22} & \dots & a_{2n}
\end{pmatrix}$,
where the biletters 
$\begin{pmatrix}
a_{1i} \\
a_{2i} \end{pmatrix}$ are in lexicographical order and the biletter
$\begin{pmatrix}
i \\
j  \end{pmatrix}$ occurs with multiplicity equal to the $(i,j)$-entry of 
$A$.  The word $w_A := a_{21} a_{22} \dots a_{2n}$ given by the bottom
row of this matrix is mapped to a pair $(P(w_A), Q(w_A))$ under RSK insertion,
where $P(w_A)$ is a semistandard tableau of content equal to the column
content vector of the matrix $A$ and $Q(w_A)$ is a standard tableau having
the same shape as $P(w_A)$.  Using the fact that matrix transposition
corresponds under RSK to swapping tableaux (see \cite{StanEC2}), 
one sees that for any compositions
$\mu, \nu \models n$,
\begin{equation*}
\sum_{\lambda \vdash n} K_{\lambda,\mu}(q) K_{\lambda,\nu}(t) =
\sum_A q^{charge(w_{A^T})} t^{charge(w_A)},
\end{equation*}
where the sum ranges over the set of all $\mathbb{N}$-matrices $A$ with row 
content $\mu$ and column content $\nu$
and $A^T$ is the transpose of $A$.  Similarly, one has that
\begin{equation*}
\sum_{\lambda \vdash n} K_{\lambda,\mu}(q) K_{\lambda',\nu}(t) =
\sum_A q^{charge(w_{A^T})} t^{charge(w_A)},
\end{equation*}
where the sum ranges over all 0,1-matrices $A$ with row content $\mu$ and 
column content $\nu$.  Thus, apart from the factors 
$\epsilon_n(q,t)$ and $\delta_n(q,t)$, the polynomials
$X(q,t)$ appearing in the biCSPs of Theorems 1.3 and 1.4 are the 
generating functions for the pair of statistics
$A \mapsto (charge(A^T), charge(A))$ on the set $X$. 
\end{rmk}
\section{Proofs of Theorems 1.3 and 1.4}

The proofs of all of the above biCSPs will be `semi-combinatorial',
relying on enumerative results arising from RSK and
the Stanton-White rim hook correspondence \cite{SW} as well as
algebraic results from symmetric function theory due to 
Lascoux, Leclerc, and Thibon \cite{LLTUnity} \cite{LLTRibbon}.
Interestingly, although the formulas for $X(q,t)$ involve many 
Kostka-Foulkes polynomials, we shall not explicitly need any facts
about the charge statistic on tableaux.
Let $\Lambda$ denote the ring of symmetric functions
in $x = (x_1, x_2, \dots)$ having coefficients in 
$\mathbb{C}(q)$, where $q$ is a formal indeterminate.  
The \emph{Hall inner product} 
$\langle \cdot , \cdot \rangle$
on $\Lambda$ defined by declaring the basis 
$\{s_{\lambda}\}$ of Schur functions to be orthonormal.
For any composition $\mu \models n$, the \emph{Hall-Littlewood
symmetric function} $Q_{\mu}(x_1,x_2, \dots ; q)$ is defined
by
\begin{equation*}
Q_{\mu}(x;q) = \sum_{\lambda \vdash n} K_{\lambda,\mu}(q)s_{\lambda}(x).
\end{equation*}
Specializing to $q = 1$, we have that $Q_{\mu}(x;1) = \sum_{\lambda \vdash n} K_{\lambda, \mu} s_{\lambda}(x) = h_{\mu}(x)$, where $h_{\mu}$ is the complete homogeneous symmetric function indexed by $\mu$.  Thus, the Hall-Littlewood symmetric functions may be regarded
as $q$-deformations of the homogeneous symmetric functions.

For any 
$k \geq 0$, define a linear operator $\psi^{k}$ on $\Lambda$
by
\begin{equation*}
\psi^k(F(x_1, x_2, \dots)) = p_k \circ F = 
F(x_1^k, x_2^k, \dots).
\end{equation*}
Here $p_k \circ F$ is plethystic substitution.
Following Lascoux, Leclerc, and Thibon \cite{LLTUnity}, let 
$\phi_k$ be the adjoint of $\psi^k$ with respect to the 
Hall inner product.  That is, $\phi_k$ is defined by the 
condition $\langle F, \phi_k(G) \rangle = 
\langle \psi^k(F), G \rangle$ for any symmetric functions
$F, G$.  
For any composition $\mu \models n$ and any positive integer 
$k$ so that $\mu_i | k$ for all $i$, define the composition
$\frac{1}{k} \mu \models n/k$ by 
$(\frac{1}{k} \mu)_i = \frac{\mu_i}{k}$.
In addition, for any composition $\mu \models n$ with all 
part multiplicities divisible by $k$,  let $\mu^{1/k}$be any composition of $n/k$ obtained by 
dividing all part multiplicities 
in $\mu$ by $k$.  
In particular, if all of the part multiplicities in $\mu$ are divisible by 
$k$, the power sum symmetric function 
$p_{\mu^{1/k}}$, the elementary symmetric function
$e_{\mu^{1/k}}$, and the complete homogeneous symmetric 
function $h_{\mu^{1/k}}$ are all well-defined.
Finally, let $\omega: \Lambda \rightarrow \Lambda$
be the involution on the ring of symmetric functions which
interchanges elementary and homogeneous symmetric functions:
$\omega(e_n) = h_n$. 

\begin{lem}
The operators $\psi^{k}$ and $\phi_k$ are both ring
homomorphisms.  Moreover, we have the following 
equalities of operators 
on $\Lambda$ for any $k, \ell \geq 0$.\\
1. $\psi^{k} \psi^{\ell} = \psi^{k \ell}$ \\
2. $\phi_k \phi_{\ell} = \phi_{k \ell}$ \\
3. $\phi_k \psi^{k} \phi_{\ell} = 
    \phi_{\ell} \phi_k \psi^{k}$ \\
4. $\phi_k \psi^k \psi^{\ell} =
    \psi^{\ell} \phi_k \psi^k$. \\
If in addition $k$ and $\ell$ are relatively prime, 
we also have \\
5. $\phi_k \psi^{\ell} = \psi^{\ell} \psi_k$.
\end{lem}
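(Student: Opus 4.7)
My plan is to reduce every assertion to computations on the power-sum basis. Since $\Lambda$ is freely generated as a $\mathbb{C}(q)$-algebra by $p_1, p_2, \ldots$ and $\langle p_\lambda, p_\mu\rangle = z_\lambda\,\delta_{\lambda,\mu}$, once multiplicativity of both operators is established each identity becomes a one-line check on a single $p_m$. The map $\psi^k$ is a ring homomorphism by its very definition as the substitution $x_i\mapsto x_i^k$, and clearly satisfies $\psi^k(p_m) = p_{km}$. For $\phi_k$, writing $\phi_k(p_\mu)=\sum_\nu a_\nu p_\nu$ and applying adjointness,
\begin{equation*}
a_\lambda\, z_\lambda = \langle p_\lambda,\phi_k(p_\mu)\rangle = \langle \psi^k(p_\lambda), p_\mu\rangle = \langle p_{k\lambda}, p_\mu\rangle = z_\mu\,\delta_{k\lambda,\mu}.
\end{equation*}
Combined with the identity $z_{k\lambda} = k^{\ell(\lambda)}\,z_\lambda$, this yields $\phi_k(p_m)=0$ when $k\nmid m$ and $\phi_k(p_{km})=k\,p_m$.

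The main obstacle is verifying that $\phi_k$ is a ring homomorphism, since the adjoint definition does not make this transparent. The cleanest conceptual route is to note that $\psi^k$ is a coalgebra homomorphism for the standard Hopf structure on $\Lambda$ (the power sums are the primitive elements and $\psi^k$ sends primitives to primitives), so its adjoint with respect to the Hall pairing, a Hopf pairing, is automatically an algebra homomorphism. Alternatively, the explicit formula above already agrees with the unique algebra homomorphism sending $p_m\mapsto 0$ for $k\nmid m$ and $p_{km}\mapsto k\,p_m$, since that homomorphism sends $p_{k\lambda}$ to $k^{\ell(\lambda)}p_\lambda$, matching the adjoint computation on all of the basis $\{p_\lambda\}$.

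With both operators known to be multiplicative, identities (1)--(5) reduce to one-line verifications on $p_m$. Identity (1) is $p_m\mapsto p_{\ell m}\mapsto p_{k\ell m}$; identity (2) vanishes unless $k\ell\mid m$, in which case both sides give $k\ell\,p_{m/(k\ell)}$; identities (3) and (4) unwind similarly, both sides equaling $k\ell\,p_{m/\ell}$ (or vanishing together) in (3) and $k\,p_{\ell m}$ in (4). For (5), where I read the right-hand side as $\psi^{\ell}\phi_k$ (correcting an apparent typo, since $\psi_k$ has not been defined), the coprimality $\gcd(k,\ell)=1$ is precisely what equates the divisibility conditions $k\mid m$ and $k\mid \ell m$, after which both compositions send $p_m$ to $k\,p_{\ell m/k}$ when $k\mid m$ and to $0$ otherwise.
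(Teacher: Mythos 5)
Your proposal is correct and follows essentially the same route as the paper: use adjointness to compute $\phi_k$ on the power-sum basis, obtaining $\phi_k(p_\mu)=k^{\ell(\mu)}p_{\mu/k}$ (zero if some part of $\mu$ is not divisible by $k$), conclude $\phi_k$ is multiplicative, and verify the five relations on the generators $p_m$. You supply more detail than the paper does on the ring-homomorphism step for $\phi_k$, and you correctly read the typo $\psi_k$ in identity 5 as $\phi_k$.
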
  

\begin{proof}
Clearly $\psi^k$ is a ring map.
Using the fact that 
$\phi_k$ is the adjoint to
$\psi^k$, it's easy to check that we have the following formula
for $\phi_k$ evaluated on power sum symmetric functions
$p_{\mu}$ for $\mu \models n$:
\begin{equation*}
\phi_k(p_{\mu}) = k^{\ell(\mu)} p_{\mu/k}.
\end{equation*}

Here we interpret the right hand side to be 0 if $k$ does not 
divide every part of $\mu$.
From this formula it follows that $\phi_k$ is 
a ring homomorphism.  Now relations 1 through 5 can be 
routinely checked on the generating set
$\{ p_n \}$ of $\Lambda$ given by power sums.
\end{proof}

Remarkably, the operators $\psi^k$ can be used to evaluate
certain specialized Hall-Littlewood polynomials.  The specializations
involve application of the raising operators $\psi^k$ to homogeneous
symmetric functions.  Recall that a composition $\mu \models n$ is 
\emph{strict} if all of its parts are strictly positive.

\begin{thm} (Lascoux-Leclerc-Thibon \cite[Theorems 3.1, 3.2]{LLTUnity})
Let $\mu \models n$ be a strict composition and for 
$k | n$ 
let $\zeta$ be a primitive $k^{th}$ root of unity.
Assume that all the part multiplicities in $\mu$ are 
divisible by $k$.  Then, we have 
\begin{equation*}
Q_{\mu}(x;\zeta) = (-1)^{(k-1)\frac{n}{k}}
\psi^k (h_{\mu^{1/k}}).
\end{equation*}
\end{thm}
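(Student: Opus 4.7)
The plan is to verify the identity by expanding both sides in the power-sum basis of $\Lambda$, where $\psi^k$ acts transparently. Since $\psi^k$ is a ring homomorphism with $\psi^k(p_m) = p_{km}$, one obtains
\begin{equation*}
\psi^k(h_{\mu^{1/k}}) = \sum_{\nu \vdash n/k} z_\nu^{-1}\, p_{k\nu}(x),
\end{equation*}
a sum supported only on those partitions of $n$ whose parts are all divisible by $k$. This is the shape the left-hand side must acquire upon specializing $q \to \zeta$.

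For the left-hand side, combining the definition $Q_\mu(x;q) = \sum_\lambda K_{\lambda,\mu}(q) s_\lambda(x)$ with the Frobenius character expansion $s_\lambda = \sum_\nu z_\nu^{-1} \chi^\lambda_\nu\, p_\nu$ gives
\begin{equation*}
Q_\mu(x;q) = \sum_{\nu \vdash n} z_\nu^{-1} \Bigl( \sum_{\lambda \vdash n} K_{\lambda,\mu}(q)\, \chi^\lambda_\nu \Bigr) p_\nu(x).
\end{equation*}
The crux of the proof is then to show that at $q=\zeta$ the bracketed coefficient vanishes whenever some part of $\nu$ is not divisible by $k$, and that when $\nu = k\tau$ for $\tau \vdash n/k$ it evaluates to exactly $(-1)^{(k-1)n/k}$, independent of $\tau$.

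For the vanishing and evaluation, the approach I would take exploits the ribbon tableau interpretation of $K_{\lambda,\mu}(q)$ due to Lascoux, Leclerc, and Thibon: $K_{\lambda,\mu}(q)$ has an expansion as a spin generating function over $k$-ribbon tableaux, and at $q=\zeta$ this should collapse via a sign-reversing involution on ribbon tableaux whose underlying shape has nontrivial $k$-core. The hypothesis that every part multiplicity in $\mu$ is divisible by $k$ ensures the required $k$-quotient structure and controls the surviving terms; combining this with the Murnaghan--Nakayama rule (which relates $\chi^\lambda_\nu$ for $\nu$ having a part $k\ell$ to rim hooks of size $k\ell$ in $\lambda$) forces only partitions $\lambda$ with appropriate $k$-core to contribute. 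A useful sanity check is the single-distinct-part case $\mu = (m^k)$, where the computation should reduce directly to an iteration of the cyclotomic identity $\prod_{j=0}^{k-1}(1 - \zeta^j y) = 1 - y^k$.

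The main obstacle is the root-of-unity evaluation of $\sum_\lambda K_{\lambda,\mu}(\zeta) \chi^\lambda_\nu$: this requires the rim-hook and spin combinatorics from \cite{LLTRibbon}, together with a careful sign analysis to produce the factor $(-1)^{(k-1)n/k}$ from the heights of the rim hooks being removed. Once this character-theoretic identity is established, the power-sum expansions of both sides match term-by-term and the theorem follows immediately.
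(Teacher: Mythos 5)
The paper does not prove Theorem 2.2 --- it is quoted from \cite[Theorems 3.1, 3.2]{LLTUnity} and used as a black box --- so there is no in-paper argument to compare against; I will evaluate your attempt on its own terms. The strategic shell (expand both sides in the power sum basis and reduce to a root-of-unity vanishing and evaluation statement for the Green-polynomial-like sums $\sum_\lambda K_{\lambda,\mu}(\zeta)\chi^\lambda_\nu$) is reasonable and close in spirit to what Lascoux--Leclerc--Thibon and Morita actually do, but there is a concrete gap in your first display that propagates through the rest of the plan.

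The identity $\psi^k(h_{\mu^{1/k}}) = \sum_{\nu \vdash n/k} z_\nu^{-1} p_{k\nu}$ silently replaces $h_{\mu^{1/k}}$ by $h_{n/k}$. When $\mu^{1/k}$ has more than one part, $h_{\mu^{1/k}} = \prod_i h_{(\mu^{1/k})_i}$, and its power sum expansion $\sum_{\tau \vdash n/k} c_\tau(\mu^{1/k})\, z_\tau^{-1} p_\tau$ has coefficients $c_\tau(\mu^{1/k})$ that are generally not $1$: they count, with the usual multiplicities, the ways of distributing the parts of $\tau$ among the parts of $\mu^{1/k}$. Consequently the target you set --- that $\sum_\lambda K_{\lambda,\mu}(\zeta)\chi^\lambda_{k\tau}$ should equal $(-1)^{(k-1)n/k}$, ``independent of $\tau$'' --- is false in general. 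Matching the coefficient of $p_{k\tau}$ on both sides, and using $z_{k\tau} = k^{\ell(\tau)} z_\tau$, shows that the statement you actually need is $\sum_\lambda K_{\lambda,\mu}(\zeta)\chi^\lambda_{k\tau} = (-1)^{(k-1)n/k}\, k^{\ell(\tau)}\, c_\tau(\mu^{1/k})$, which depends on both $\tau$ and $\mu$. The one case your formula does handle is $\mu^{1/k}$ a single part --- exactly the $\mu=(m^k)$ sanity check you propose --- so that check would not have exposed the error. I am also skeptical of the proposed sign-reversing involution step: $K_{\lambda,\mu}(q)$ is the charge generating function, and there is no general spin-over-$k$-ribbon-tableaux formula for $K_{\lambda,\mu}(q)$ at arbitrary $\mu$ on which to run such an involution; in this paper the ribbon/spin machinery enters through $\psi^m(h_\alpha) = \sum_T \epsilon_m(T)\, s_{\mathrm{sh}(T)}$ from \cite{LLTRibbon}, which is applied \emph{after} Theorem 2.2 is invoked, not in its proof.
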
     

The sign appearing in the above theorem is the reason
why we needed the factor of $\epsilon_n(q,t)$ in 
Theorem 1.3.

\begin{proof} (of Theorem 1.3) 
Without loss of generality we may assume that the compositions
$\mu$ and $\nu$ are strict.
Let $\zeta$ and $\zeta'$ be roots of unity of 
multiplicative orders $k$ and $\ell$, where each part
of $\mu$ has multiplicity divisible by $k$ and 
each part of $\nu$ has multiplicity divisible by $\ell$.
Temporarily ignoring the factor 
$\epsilon_n(q,t)$, we are interested in expressions like
\begin{equation*}
\sum_{\lambda \vdash n} 
K_{\lambda,\mu}(\zeta) K_{\lambda,\nu}(\zeta').
\end{equation*}  
This sum
is equal to the Hall inner product 
\begin{equation*}
\langle Q_{\mu}(x ; \zeta), Q_{\nu} (x ; \zeta') \rangle
\end{equation*}
of specialized Hall-Littlewood functions.

By Theorem 2.2, the above inner product up to sign is equal
to
\begin{equation*}
\langle \psi^k (h_{\mu^{1/k}}),
\psi^{\ell} (h_{\nu^{1/\ell}}) \rangle.
\end{equation*}
Let $m$ be the greatest common divisor of $k$ and $\ell$.
Applying the operator calculus in Lemma 2.1, we see that
the previous inner product is equal to
\begin{equation*}
\langle \psi^m \phi_{\ell/m} (h_{\mu^{1/k}}) ,
\psi^m \phi_{k/m} (h_{\nu^{1/\ell}}) \rangle.
\end{equation*}

For $N \geq 0$, recall that the $a$-core of the partition $(N)$ with a single
part is empty if and only if $a | N$, in which case the $a$-quotient of $(N)$ is the sequence
$((\frac{N}{a}), \emptyset , \dots, \emptyset )$ and the $a$-sign of $(N)$ is 1.
By a result of Littlewood \cite{LittlewoodMod} (See Formula 13 of
\cite{LLTRibbon}), 
 the evaluation $\phi_a(h_N)$
is equal to $h_{N/a}$ if $a | N$ and 0 otherwise.    
From this and the fact that the $\phi$ operators are ring
homomorphisms we get that the last inner product is equal
to
\begin{equation*}
\langle \psi^m (h_{\frac{m}{\ell} \mu^{1/k}}),
\psi^m (h_{\frac{m}{k} \nu^{1/\ell}}) \rangle,
\end{equation*} 
where we interpret $h_{\frac{1}{a} \alpha}$ to be equal to
zero if every part size in $\alpha$ is not divisible by $a$.

Formula 17 in \cite{LLTRibbon} implies that
\begin{equation*}
\psi^m (h_{\alpha} ) = \sum_T \epsilon_m(T) s_{sh(T)},
\end{equation*}
where the sum ranges over all semistandard $m$-ribbon
tableaux $T$ having content $\alpha$, 
$\epsilon_m(T)$ is the $m$-sign of the ribbon tableau
$T$, and sh($T$) is the shape of $T$.  By the orthonormality
of the Schur function basis, this implies that the 
inner product of interest
\begin{equation*}
\langle \psi^m (h_{\frac{m}{\ell} \mu^{1/k}}),
\psi^m (h_{\frac{m}{k} \nu^{1/\ell}}) \rangle,
\end{equation*}
is equal to the number of ordered pairs $(P,Q)$ of 
semistandard $m$-ribbon tableaux 
of the same shape
where $P$ has content
$\frac{m}{\ell} \mu^{1/k}$ and $Q$ has content
$\frac{m}{k} \nu^{1/\ell}$.  By the Stanton-White
rim hook correspondence \cite{SW}, this latter number is equal to 
the number of pairs $(P,Q)$, where 
$P = (P_1, \dots, P_m)$ and $Q = (Q_1, \dots, Q_m)$
are $m$-tuples of semistandard tableaux with
$P_i$ having the same shape as $Q_i$ for all $i$ and such
that $P$ and $Q$ have contents
$\frac{m}{\ell} \mu^{1/k}$ and
$\frac{m}{k} \nu^{1/\ell}$.
By RSK insertion, this enumeration is again equal to the
number of sequences $(A_1, \dots, A_m)$
of $\frac{\ell(\mu) m}{ \ell} \times \frac{\ell(\nu) m}{k}$ 
$\mathbb{N}$-matrices with row vectors summing to
$\frac{m}{\ell} \mu^{1/k}$ and column vectors
summing to
$\frac{m}{k} \nu^{1/\ell}$.  An analysis of 
fundamental domains under the action of row and column
shifts shows that sequences of matrices as above are 
in bijection with $\ell(\mu) \times \ell(\nu)$ matrices $A$ with row
vector $\mu$ and column vector $\nu$ which are fixed under
$\ell(\mu)/k$-fold row rotation and $\ell(\nu)/\ell$-fold column rotation.
Up to sign, this proves Theorem 1.3.  

To make sure the sign in Theorem 1.3 is correct, 
we need to show that the expression
\begin{equation*}
\epsilon_n(\zeta,\zeta') \sum_{\lambda \vdash n} 
K_{\lambda,\mu}(\zeta) K_{\lambda,\nu}(\zeta')
\end{equation*}
is nonnegative.
By
Theorem 2.2 we need
only check that 
\begin{equation*}
\epsilon_n(\zeta,\zeta') = 
(-1)^{((k-1)\frac{n}{k} + (\ell-1)\frac{n}{\ell})}.
\end{equation*}
This is a routine exercise.
 
\end{proof}

In order to prove Theorem 1.4 we will need a 
pair of commutativity results
regarding the raising and lowering operators and the involution 
$\omega$.

\begin{lem}
1.  If $k$ is odd, we have that $\omega \phi_k = \phi_k \omega$ and
$\omega \psi^k = \psi^k \omega$. \\
2.  If $\ell > k$, we have the relation $\phi_{2^{\ell}} \omega \psi^{2^{k}} =
\phi_{2^{k}} \omega \psi^{2^{k}} \phi_{2^{\ell - k}}$. \\
3.  For any $\ell > 0$ and any composition $\mu$ such that $2^{\ell} | |\mu|$, we have that 
$\phi_{2^{\ell}} \omega (h_{\mu}) = (-1)^{\frac{|\mu|}{2^{\ell}}} \omega \phi_{2^{\ell}} (h_{\mu})$.
\end{lem}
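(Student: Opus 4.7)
The plan is to reduce every part of the lemma to a computation on the power-sum basis $\{p_n\}_{n \geq 1}$, using three elementary formulas: $\psi^k(p_n) = p_{kn}$ (from the definition of $\psi^k$), $\phi_k(p_n) = k \cdot p_{n/k}$ if $k \mid n$ and $0$ otherwise (the length-one case of the formula established inside the proof of Lemma 2.1), and the classical $\omega(p_n) = (-1)^{n-1} p_n$. Together with the ring-homomorphism property of all three operators (for $\omega$ classically, for $\phi_k, \psi^k$ by Lemma 2.1), this reduces any identity between compositions of these maps to checking it on the multiplicative generators $p_n$ of $\Lambda$.

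For part 1, I would test the two identities $\omega \psi^k = \psi^k \omega$ and $\omega \phi_k = \phi_k \omega$ on $p_n$. The two sides differ only by a sign that compares $(-1)^n$ with $(-1)^{kn}$ in the first case, and $(-1)^n$ with $(-1)^{n/k}$ in the second (after accounting for the divisibility condition $k \mid n$); each of these parity comparisons succeeds precisely when $k$ is odd, giving both identities in that case.

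For part 2, I would again evaluate both sides on $p_n$. The assumption $\ell > k$ with $k \geq 1$ guarantees that each intermediate power sum that arises (whose degree is always an even multiple of $n$) contributes a predictable factor of $-1$ under $\omega$; after chasing through the divisibility condition $2^{\ell-k} \mid n$, both sides simplify to $-2^\ell \, p_{n/2^{\ell-k}}$ and vanish otherwise. So part 2 reduces to a short sign and power-of-$2$ bookkeeping exercise.

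Part 3 is not a pure operator identity, since the right-hand side's sign genuinely depends on $|\mu|$, so the generators-only argument does not apply. Instead I would use multiplicativity of $\omega$ and $\phi_{2^\ell}$ to reduce via $h_\mu = \prod_i h_{\mu_i}$ to the single-part case: if any $\mu_i$ is not divisible by $2^\ell$ then both sides vanish by the vanishing clause in the formula for $\phi$, and otherwise the statement reduces to $\phi_{2^\ell}(e_N) = (-1)^{N - N/2^\ell}\, e_{N/2^\ell}$ whenever $2^\ell \mid N$. This last identity I would prove by expanding $e_N = \sum_{\lambda \vdash N}(-1)^{N-\ell(\lambda)} z_\lambda^{-1} p_\lambda$, applying $\phi_{2^\ell}$ termwise, and reindexing by $\nu = \lambda/2^\ell$, using $z_\lambda = (2^\ell)^{\ell(\nu)} z_\nu$ to cancel the factor $(2^\ell)^{\ell(\lambda)}$ produced by $\phi_{2^\ell}$. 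Taking products over the parts of $\mu$ and observing that $2^\ell \mid |\mu|$ forces $|\mu|$ to be even (so the global factor $(-1)^{|\mu|}$ disappears) then yields the claimed sign $(-1)^{|\mu|/2^\ell}$. The main obstacle throughout is accurate sign bookkeeping; the only real structural point is recognising that part 3 must be handled multiplicatively rather than operator-wise.
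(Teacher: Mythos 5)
Your proof is correct, and for parts 1 and 2 it is exactly the paper's argument: reduce to the power-sum basis using multiplicativity and the formulas $\psi^k(p_n)=p_{kn}$, $\phi_k(p_n)=k\,p_{n/k}$ (or $0$), and $\omega(p_n)=(-1)^{n-1}p_n$. For part 3 you also follow the paper's structure (observe it is not a pure operator identity, use multiplicativity to reduce to the single-part elementary symmetric function $e_N$, and then apply the evaluation $\phi_{2^\ell}(e_N)=(-1)^{N-N/2^\ell}e_{N/2^\ell}$ when $2^\ell\mid N$), but you differ in how you obtain that evaluation: the paper cites Formula 13 of Lascoux--Leclerc--Thibon, using the $2^\ell$-core, $2^\ell$-quotient, and $2^\ell$-sign of the column partition $(1^N)$, whereas you re-derive it directly from the power-sum expansion $e_N=\sum_{\lambda\vdash N}(-1)^{N-\ell(\lambda)}z_\lambda^{-1}p_\lambda$ together with $\phi_{2^\ell}(p_\lambda)=(2^\ell)^{\ell(\lambda)}p_{\lambda/2^\ell}$ and the identity $z_{2^\ell\nu}=(2^\ell)^{\ell(\nu)}z_\nu$. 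Your derivation is more self-contained and elementary (no ribbon-tableau machinery needed), while the paper's is shorter given that it has already invoked the LLT formula in the proof of Theorem 1.3; both yield the same sign $(-1)^{N(2^\ell-1)/2^\ell}=(-1)^{N/2^\ell}$ and the same vanishing condition, so the final bookkeeping, including the observation that $2^\ell\mid|\mu|$ forces $|\mu|$ even, goes through identically.
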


\begin{proof}
The operator relations 1 and 2 can both be checked on the power sum functions $\{ p_n \}$ using the 
identity $\omega(p_n) = (-1)^{n-1} p_n$ together with the fact that $\omega$, the raising operators, and the lowering operators are all ring maps.

For 3, we again appeal to Formula 13 of \cite{LLTRibbon} to get that the evaluation
$\phi_a(e_N)$ of the lowering operator $\phi_a$ on the elementary symmetric function
$e_N$ is equal to $(-1)^{\frac{N}{a}(a-1)} e_{\frac{N}{a}}$ if $a | N$ and 0 otherwise for
any $a, N \geq 0$.  Here we have used that the $a$-core of the partition $(1^N)$ is empty if and only if $a | N$, in which case the $a$-quotient of $(1^N)$ is $((1^{\frac{N}{a}}), \emptyset, \dots, \emptyset)$ and the $a$-sign of $(1^N)$ is $(-1)^{\frac{N}{a}}$.  Using this evaluation, the desired identity can be proven using the fact that $\phi_{2^{\ell}}$ and $\omega$ are ring maps.
\end{proof}

\begin{proof} (of Theorem 1.4)
Without loss of generality, we may again assume that $\mu$ and
$\nu$ are strict.
Fix divisors $k | \frac{\ell(\mu)}{a}$ and $\ell | \frac{\ell(\nu)}{b}$, 
where each part of $\mu$ has multiplicity divisible by $k$
and each part of $\nu$ has multiplicity divisible by $\ell$.  
Let $\zeta$ and $\zeta'$ be roots of unity of multiplicative orders
$k$ and $\ell$.
Recalling that $\omega(s_{\lambda}) = s_{\lambda'}$,
up to sign we were interested in expressions like 
\begin{equation*}
\sum_{\lambda \vdash n} 
K_{\lambda',\mu}(\zeta) K_{\lambda,\nu}(\zeta') = 
\langle \omega (Q_{\mu}(x ; \zeta)), Q_{\nu} (x ; \zeta') \rangle.
\end{equation*}
Applying Theorem 2.2 we see that, up to the sign $(-1)^{\frac{n}{k}(k-1) + \frac{n}{\ell}(\ell-1)}$, the above expression is equal to
\begin{equation*}
\langle \omega \psi^k (h_{\mu^{1/k}}),
\psi^{\ell} (h_{\nu^{1/\ell}}) \rangle.
\end{equation*}
Let $m$ be the greatest common divisor of $k$ and $\ell$.  We consider several cases depending on the parities of $k$ and $\ell$.

If $k$ and $\ell$ are both odd, we can use Part 1 of Lemma 2.3 together with Lemma 2.1 to derive the identity 
\begin{equation*}
\langle \omega \psi^k (h_{\mu^{1/k}}),
\psi^{\ell} (h_{\nu^{1/\ell}}) \rangle = \langle \omega \psi^m (h_{\frac{m}{\ell} \mu^{1/k}}),
\psi^m (h_{\frac{m}{k} \nu^{1/\ell}}) \rangle.
\end{equation*}

If at least one of $k$ and $\ell$ are even, since $\omega$ is involutive and an isometry with respect to the Hall inner product, we can assume that $\frac{k}{m}$ is odd.  If both $k$ and $\ell$ are even, we can use Parts 1 and 2 of Lemma 2.3 together with Lemma 2.1 to again show that
\begin{equation*}
\langle \omega \psi^k (h_{\mu^{1/k}}),
\psi^{\ell} (h_{\nu^{1/\ell}}) \rangle = \langle \omega \psi^m (h_{\frac{m}{\ell} \mu^{1/k}}),
\psi^m (h_{\frac{m}{k} \nu^{1/\ell}}) \rangle.
\end{equation*}

However, if $k$ is odd and $\ell$ is even, assuming as before the $\frac{k}{m}$ is odd, we use Parts 1 and 3 of Lemma 2.3 together with Lemma 2.1 to show that
\begin{equation*}
\langle \omega \psi^k (h_{\mu^{1/k}}),
\psi^{\ell} (h_{\nu^{1/\ell}}) \rangle = 
(-1)^{\frac{n}{\ell}}
\langle \omega \psi^m (h_{\frac{m}{\ell} \mu^{1/k}}),
\psi^m (h_{\frac{m}{k} \nu^{1/\ell}}) \rangle.
\end{equation*}

Regardless of the parities of $k$ and $\ell$,
consider the Hall inner product
\begin{equation*}
 \langle \omega \psi^m (h_{\frac{m}{\ell} \mu^{1/k}}),
\psi^m (h_{\frac{m}{k} \nu^{1/\ell}}) \rangle.
\end{equation*}
Formula 17 of \cite{LLTRibbon} again allows us to perform the raising operator
evaluations
\begin{equation*}
\psi^m (h_{\alpha} ) = \sum_T \epsilon_m(T) s_{sh(T)},
\end{equation*}
and we have that $\omega (s_{\lambda}) = s_{\lambda'}$ for any partition $\lambda$.  
In addition, given any partition $\lambda$ with empty $m$-core, we have that the 
$m$-signs of $\lambda$ and $\lambda'$ are related by 
\begin{equation*}
\epsilon_m(\lambda) = (-1)^{(m-1)\frac{|\lambda|}{m}} \epsilon_m(\lambda').
\end{equation*}
Therefore, the Hall inner product of interest 
is equal to $(-1)^{(m-1)\frac{mn}{kl}}$ times the number of 
pairs $(P, Q)$ of $m$-tuples $P = (P_1, \dots, P_m)$ and $Q = (Q_1, \dots, Q_m)$ of 
SSYT such that $P$ has content $\frac{m}{\ell}\mu^{1/k}$,
$Q$ has content $\frac{m}{k}\nu^{1/{\ell}}$, and the shape of $P_i$ is
the \emph{conjugate} of the shape of $Q_i$ for all $i$.  By the dual RSK 
algorithm, this is the number of $m$-tuples $(A_1, \dots, A_m)$ of 
$\frac{\ell(\mu)m}{\ell} \times \frac{\ell(\nu)m}{k}$ $0,1$-matrices with row vectors summing to $\frac{m}{\ell}\mu^{1/k}$ and column vectors summing to
 $\frac{m}{k}\nu^{1/{\ell}}$.  Again, an elementary analysis of  fundamental domains implies that such $m$-tuples are in bijective correspondence with the fixed point set of interest.

To check that the sign in Theorem 1.4 is correct, we check that the expression
\begin{equation*}
\delta_n(\zeta,\zeta')
\sum_{\lambda \vdash n} 
K_{\lambda',\mu}(\zeta) K_{\lambda,\nu}(\zeta')
\end{equation*}
is nonnegative.  This is a routine case by case check depending on the parities of the numbers
$k, \ell, \frac{n}{k},$ and $\frac{n}{\ell}$.
\end{proof}

\section{Proofs of Theorems 1.3 and 1.4 using Representation Theory}

In this section we use results about the graded characters of DeConcini-Procesi modules \cite{DP} to sketch a representation theoretic proof of Theorems 1.3 and 1.4
up to modulus.  The author is grateful to Victor Reiner for outlining this argument.  

Given any integer $n > 0$, let $X_n$ denote the variety of complete flags
$0 = V_0 \subset V_1 \subset \dots \subset V_n = \mathbb{C}^n$ in $\mathbb{C}^n$ with $\dim V_i = i$.  For any composition $\mu \models n$, let 
$u \in GL_n(\mathbb{C})$ be an
$n \times n$ unipotent complex matrix with Jordan block decomposition given
by $\mu$.  The subset $X_\mu \subseteq X_n$ of flags stabilized by the action of $u$ is a subvariety of $X_n$ and Springer \cite{Springer} showed that 
the cohomology ring $H^{*}(X_{\mu})$  
carries a natural graded representation of $S_n$.
It turns out that 
$H^i(X_{\mu}) = 0$ for odd $i$, so one defines a graded $S_n$-module 
$R_{\mu} := \bigoplus_{d \geq 0} R_{\mu}^d$, with $R_{\mu}^d := H^{2d}(X_{\mu})$.  The graded character $\chr _q R_{\mu}$ is the symmetric function
$\chr _q R_{\mu} = \sum_{d \geq 0} q^d \chr R_{\mu}^d$, where $\chr R_{\mu}^d$ is the Frobenius character of the $S_n$-module $R_{\mu}^d$.  

Define the \emph{modified Kostka-Foulkes polynomial} $\widetilde{K}_{\lambda,\mu}(q) \in 
\mathbb{N}[q]$ to be the generating function for the cocharge statistic on SSYT of shape
$\lambda$ and content $\mu$:
\begin{equation*}
\widetilde{K}_{\lambda,\mu}(q) := \sum_{T} q^{cocharge(T)}.
\end{equation*}
For $\mu$ a partition,
the modified Kostka-Foulkes polynomials are related to the ordinary Kostka-Foulkes polynomials
by $K_{\lambda,\mu}(q) = q^{n(\mu)} \widetilde{K}_{\lambda,\mu}(\frac{1}{q})$, where
$n(\mu) = \sum_i (i-1)\mu_i$.  
The \emph{modified Hall-Littlewood polynomials} $\widetilde{Q}_{\mu}(x;q)$ 
for $\mu \models n$ a composition
are given by
\begin{equation*}
\widetilde{Q}_{\mu}(x;q) := \sum_{\lambda \vdash n} \widetilde{K}_{\lambda,\mu}(q) s_{\lambda}(x).
\end{equation*}
Garsia and Procesi \cite{GarP} proved 
that the graded character of the module $R_{\mu}$ is equal to the 
modified Hall-Littlewood polynomial: 
$\chr_q R_{\mu} = \widetilde{Q}_{\mu}(x;q)$.
For any number $\ell > 0$, we can regard $R_{\mu}$ as a graded 
$S_n \times \mathbb{Z}_{\ell}$-module by letting the cyclic group $\mathbb{Z}_{\ell}$ act on
the graded component $R_{\mu}^d$ by scaling by a factor of $e^{\frac{2 \pi i d}{\ell}}$.

Suppose now that the composition $\mu \models n$ has cyclic symmetry of order $a | \ell(\mu)$.
Let $Y_{\mu}$ be the set of all words $(w_1, \dots, w_n)$ of length $n$ and content $\mu$.  Then $Y_{\mu}$ is naturally a $S_n \times \mathbb{Z}_{\ell(\mu)/a}$-set, where the symmetric group $S_n$ acts on the indices and the cyclic group $\mathbb{Z}_{\ell(\mu)/a}$ acts on the letter values, sending 
$i$ to $i+a$ mod $\ell(\mu)$.  
The vector space $\mathbb{C}[Y_{\mu}]$ is therefore a module over 
$S_n \times \mathbb{Z}_{\ell(\mu)/a}$ by linear extension.
The following module isomorphism is a remarkable result of Morita and Nakajima.

\begin{thm} \cite[Theorem 13]{MNSym}
Let $\mu \models n$ by a composition with cyclic symmetry $a | \ell(\mu)$.
We have an isomorphism of $S_n \times \mathbb{Z}_{\ell(\mu)/a}$-modules
\begin{equation*}
R_{\mu} \cong \mathbb{C}[Y_{\mu}].
\end{equation*}
\end{thm}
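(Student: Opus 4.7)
The plan is to establish this isomorphism by matching the full $S_n \times \mathbb{Z}_{\ell}$-Frobenius characters of the two modules, where $\ell = \ell(\mu)/a$; since $\mathbb{C}[S_n \times \mathbb{Z}_{\ell}]$ is semisimple, matching characters suffices.  Both sides already agree as $S_n$-modules: $\mathbb{C}[Y_{\mu}]$ is the induced permutation module $\mathrm{Ind}_{S_{\mu}}^{S_n}(\mathbf{1})$ with Frobenius character $h_{\mu}$, while by Garsia-Procesi the graded character $\chr_q R_{\mu} = \widetilde{Q}_{\mu}(x;q)$ specializes at $q=1$ to $\widetilde{Q}_{\mu}(x;1) = h_{\mu}$.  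Hence it remains to compare, for each $m$, the $S_n$-class function $g \mapsto \mathrm{tr}((g,c^m), V)$ computed on each side, where $c$ is a fixed generator of $\mathbb{Z}_{\ell}$.

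On the $R_{\mu}$ side the computation is immediate: under the prescribed $\mathbb{Z}_{\ell}$-action scaling $R_{\mu}^d$ by $\zeta^d$ (with $\zeta = e^{2\pi i/\ell}$), the Frobenius character of $c^m$ equals the specialization $\widetilde{Q}_{\mu}(x;\zeta^m)$.  Applying the modified-Hall-Littlewood analogue of Theorem 2.2 (together with the identity $\widetilde{Q}_{\mu}(x;q) = q^{n(\mu)} Q_{\mu}(x;q^{-1})$), this specialization reduces, when $k = \mathrm{ord}(\zeta^m)$ divides every part multiplicity of $\mu$ and vanishes otherwise, to a plethystic expression of the form $\pm \zeta^{n(\mu)} \psi^k(h_{\mu^{1/k}})$.

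For $\mathbb{C}[Y_{\mu}]$ the character of $c^m$ is to be computed by direct fixed-point analysis on the permutation basis $Y_{\mu}$.  The endomorphism $c^m$ acts on the alphabet $[\ell(\mu)]$ by shifting by $am$ modulo $\ell(\mu)$, partitioning it into $c^m$-orbits each of uniform size $k$.  A word fixed by $(g,c^m)$ is one whose letter sequence is constant on each $c^m$-orbit as one traverses the cycles of $g$; such words biject with $(\ell(\mu)/k)$-letter words of length $n/k$ and content $\mu^{1/k}$, together with the $k$-fold cyclic labeling accounting for the $c^m$-phase on each cycle.  Standard wreath-product character theory (equivalently, the character-theoretic meaning of plethysm with $p_k$) then identifies this class function, on the Frobenius side, with $\psi^k(h_{\mu^{1/k}})$.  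Matching with the $R_{\mu}$ side gives the claimed character equality.

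The main obstacle will be the careful reconciliation of the scalar factors: the LLT-type sign $(-1)^{(k-1)n/k}$ in Theorem 2.2, the root-of-unity factor $\zeta^{n(\mu)}$ from the conversion between $Q_{\mu}$ and $\widetilde{Q}_{\mu}$, and any sign collected from the wreath-product Frobenius identification on the $\mathbb{C}[Y_{\mu}]$ side must all be shown to cancel.  Here the hypothesis that $\mu$ has cyclic symmetry of order $a$ enters crucially, forcing congruences on $n(\mu)$ modulo $\ell$ that make the cancellation work.  An alternative and perhaps cleaner route, which I would explore if the sign bookkeeping proved unwieldy, is to refine the Garsia-Procesi presentation of $R_{\mu}$ equivariantly, constructing an explicit $\mathbb{Z}_{\ell}$-equivariant basis indexed by words in $Y_{\mu}$ and thereby obtaining a direct isomorphism rather than one extracted from character identities.
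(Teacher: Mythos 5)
This is a cited theorem (Morita--Nakajima \cite[Theorem~13]{MNSym}), and the paper itself gives no proof: it only remarks that Morita and Nakajima proved it by comparing characters and that Morita gave a second character-theoretic proof using the same plethystic operators $\psi^k,\phi_k$ that appear in Section~2. Your sketch is precisely that second route, so it is consonant with what the paper reports about the literature rather than being an alternative to an argument the paper contains. Two small cautions worth recording: (i) the identity $\widetilde{Q}_{\mu}(x;q)=q^{n(\mu)}Q_{\mu}(x;q^{-1})$ is stated in the paper only for $\mu$ a partition, so to apply it here you should first pass to the partition rearrangement of $\mu$ (which does not change $K_{\lambda,\mu}$) before invoking Theorem~2.2; and (ii) the factor $\zeta^{n(\mu)}$ you isolate is a genuine root of unity, not merely a sign, so the cancellation you flag as the ``main obstacle'' really does require the cyclic-symmetry hypothesis on $\mu$ to force $\zeta^{n(\mu)}\cdot(-1)^{(k-1)n/k}$ to agree with the scalar produced by the wreath-product count on the $\mathbb{C}[Y_{\mu}]$ side. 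You are right to regard that reconciliation as the crux; as written the proposal is a plausible outline rather than a complete proof, but it does match the approach the paper attributes to the cited sources.
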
    

Morita and Nakajima proved this result by comparing the characters of the 
modules in question.  Morita  \cite[Theorem 4]{Morita} 
gave another character theoretic proof using the plethystic operators $\psi^k$ and $\phi_k$ in Section 3 of this paper.  Shoji \cite{Shoji} proved a generalization of this result to other types in which one replaces the variety $X_{\mu}$ with the variety of Borel subgroups containing a unipotent element $u$ of a simple algebraic group $G$ over $\mathbb{C}$.  

Now suppose that we are given two compositions $\mu, \nu \models n$.  
Elements of the product $Y_{\mu} \times Y_{\nu}$ can be thought of as 
$2 \times n$ matrices
$\begin{pmatrix}
a_{11} & a_{12} & \dots & a_{1n} \\
a_{21} & a_{22} & \dots & a_{2n}
\end{pmatrix}$ of letters such that the content of the word
$a_{11}a_{12} \dots a_{1n}$ is equal to $\mu$ and the content of the word
$a_{21}a_{22} \dots a_{2n}$ is equal to $\nu$.  The product $S_n \times S_n$
 of symmetric groups acts on these matrices by independent permutation of the
 indices in the top and bottom rows.  If in addition the compositions $\mu$
and $\nu$ have cyclic symmetries of orders $a | \ell(\mu)$ and $b | \ell(\nu)$,
then the product set $Y_{\mu} \times Y_{\nu}$ carries an action of 
$S_n \times \mathbb{Z}_{\ell(\mu)/a} \times S_n \times \mathbb{Z}_{\ell(\nu)/b}$,
where the cyclic groups act by modular addition on the letter values.  

As a direct consequence of Theorem 3.1 we have that
\begin{equation*}
R_{\mu} \otimes_{\mathbb{C}} R_{\nu} \cong \mathbb{C}[Y_{\mu} \times Y_{\nu}]
\end{equation*} 
as modules over the group $S_n \times \mathbb{Z}_{\ell(\mu)/a} \times S_n \times \mathbb{Z}_{\ell(\nu)/b}$, where the module on the left hand side is
bigraded.
Considering the diagonal embedding $S_n \hookrightarrow S_n \times S_n$ given by $w \mapsto (w,w)$, restricting the above isomorphism yields an isomorphism 
\begin{equation*}
R_{\mu} \otimes_{\mathbb{C}} R_{\nu} \cong \mathbb{C}[Y_{\mu} \times Y_{\nu}]
\end{equation*}
of $S_n \times \mathbb{Z}_{\ell(\mu)/a} \times \mathbb{Z}_{\ell(\nu)/b}$-modules.  
Viewing elements of $Y_{\mu} \times Y_{\nu}$ as $2 \times n$ matrices, the action of $S_n$ on the right hand side is induced by its natural action on
matrix columns.  
Finally, if 
$\epsilon$ is any irreducible character of $S_n$, we may restrict the above
isomorphism to its $\epsilon$-isotypic component to get an isomorphism 
\begin{equation*}
[R_{\mu} \otimes_{\mathbb{C}} R_{\nu}]^{\epsilon} \cong \mathbb{C}[Y_{\mu} \times Y_{\nu}]^{\epsilon}
\end{equation*}
of modules over 
$\mathbb{Z}_{\ell(\mu)/a} \times \mathbb{Z}_{\ell(\nu)/b}$, where the 
exponential notation denotes taking isotypic components and 
the left hand side is bigraded with the cyclic groups acting by scaling 
by a root of unity in each grade.  At least
up to modulus, Theorems 1.3 and 1.4 can be deduced
from specializing $\epsilon$ to the trivial and sign characters of $S_n$, respectively.

Suppose first that $\epsilon = \triv$ is the trivial character of $S_n$.  Then the isotypic component $\mathbb{C}[Y_{\mu} \times Y_{\nu}]^{\triv} =
\mathbb{C}[Y_{\mu} \times Y_{\nu}]^{S_n}$ has a natural basis given by sums over orbits of the action of $S_n$ on $Y_{\mu} \times Y_{\nu}$.  Each of these orbits has a unique representative of the form
$\begin{pmatrix}
a_{11} & a_{12} & \dots & a_{1n} \\
a_{21} & a_{22} & \dots & a_{2n}
\end{pmatrix}$, where the biletters 
$\begin{pmatrix}
a_{1i} \\
a_{2i} \end{pmatrix}$ are in lexicographical order.  Such orbit representatives are in
natural bijection with $\mathbb{N}$-matrices with row content $\mu$ and 
column content $\nu$.  It is easy to see that the action of the cyclic group
product $\mathbb{Z}_{\ell(\mu)/a} \times \mathbb{Z}_{\ell(\nu)/b}$ is given
by $a$-fold row and $b$-fold column rotation.  Therefore, the number of
fixed points of a group element $g \in \mathbb{Z}_{\ell(\mu)/a} \times \mathbb{Z}_{\ell(\nu)/b}$ in the action of Theorem 1.3 is equal to the trace of $g$ on
$\mathbb{C}[Y_{\mu} \times Y_{\nu}]^{\triv}$ and therefore is also equal
to the trace of $g$ on $[R_{\mu} \otimes_{\mathbb{C}} R_{\nu}]^{\triv}$. 
This latter trace can be identified with a polynomial evaluation at roots
of unity by considering the bigraded Hilbert series of 
$[R_{\mu} \otimes_{\mathbb{C}} R_{\nu}]^{\triv}$, proving Theorem 1.3 up to modulus.

To prove Theorem 1.4, we instead focus on the \emph{sign} character 
$\epsilon = \sign$ of the symmetric group $S_n$.  The isotypic component
$\mathbb{C}[Y_{\mu} \times Y_{\nu}]^{\sign}$ has as basis the set of 
$S_n$-antisymmetrized sums over the element of the set 
$Y_{\mu} \times Y_{\nu}$.  Representing elements of $Y_{\mu} \times Y_{\nu}$ as $2 \times n$ matrices, since antisymmetrization kills any matrix with repeated biletters, these basis elements are in natural bijection with
$0,1-$matrices of row content $\mu$ and column content $\nu$.  The cyclic group product $\mathbb{Z}_{\ell(\mu)/a} \times
\mathbb{Z}_{\ell(\nu)/b}$ acts on this basis by $a$-fold row and $b$-fold
column rotation, up to a plus or minus sign which arises from antisymmetrization
and sorting biletters into lexicographical order.  It is fairly easy to see
that up to sign the number of fixed points of a group element
$g \in \mathbb{Z}_{\ell(\mu)/a} \times
\mathbb{Z}_{\ell(\nu)/b}$ is the absolute value of the trace of $g$ on 
$\mathbb{C}[Y_{\mu} \times Y_{\nu}]^{\sign}$.  This latter number is also
the absolute value of the trace of $g$ on 
$[R_{\mu} \otimes_{\mathbb{C}} R_{\nu}]^{\sign}$.  This trace can be
identified with a polynomial evaluation by considering bigraded Hilbert 
series as in the case of the trivial isotypic component.  Up to modulus, 
this verifies Theorem 1.4.

\section{Acknowledgements}
The author is grateful to Victor Reiner, Dennis Stanton, and Dennis White for helpful conversations.

\bibliography{../bib/my}

\end{document}